\renewcommand{\epsilon}{\varepsilon}
\newtheorem{theorem}{Theorem}[section]
\newtheorem{claim}[theorem]{Claim}
\theoremstyle{definition}
\numberwithin{equation}{section}
\numberwithin{theorem}{section}
\begin{document}

\title{Shotgun edge assembly of random jigsaw puzzles}
\author{Anders Martinsson}
\maketitle

\begin{abstract}
In recent work by Mossel and Ross, it was asked how large $q$ has to be for a random jigsaw puzzle with $q$ different shapes of ``jigs'' to have exactly one solution. The jigs are assumed symmetric in the sense that two jigs of the same type always fit together. They showed that for $q=o(n^{2/3})$ there are a.a.s. multiple solutions, and for $q=\omega(n^2)$ there is a.a.s. exactly one. The latter bound has since been improved to $q\geq n^{1+\varepsilon}$ independently by Nenadov, Pfister and Steger, and by Bordernave, Feige and Mossel. Both groups further remark that for $q=o(n)$ there are a.a.s. duplicate pieces in the puzzle. In this paper, we show that such puzzle a.a.s. has multiple solutions whenever $q\leq \frac{2}{\sqrt{e}}\,n - \omega(\log_2 n)$, even if permuting identical pieces is not considered changing the solution. We further give some remarks about the number of solutions, and the probability of a unique solution in this regime.
\end{abstract}

\section{Introduction}

We construct a jigsaw puzzle in the following manner. Cut an $n\times n$ board into $n^2$ identical squares, which we refer to as \emph{jigsaw pieces}. We color each of the four edges of each piece with one out of $q$ colors with the restriction that whenever two pieces share a side, the corresponding edges should have the same color. Note that this means that we also color the edges along the boundary of the puzzle. In \cite{MR15} it was asked how large $q$ needs to be in order for a uniformly chosen such puzzle to be recovered uniquely from an unordered box of jigsaw pieces.

We will here assume that, when reassembling the puzzle, the pieces can be rotated any multiple of $90^\circ$, but not flipped upside down. In other words, given a single jigsaw piece there is no way to tell which of the four orientations it originally had in the puzzle. We remark that this assumption differs between previous papers on the topic. In \cite{MR15} it seems to be tacitly assumed that rotations are allowed, in \cite{NPS16} it is explicitly stated that they allow rotations, and in \cite{BFM16} it is stated that they, for sake of simplicity, assume rotations are not allowed.

The notion of ``unique recovery'' needs some elaboration. Note that rotating the original configuration of the puzzle a multiple of $90^\circ$ will result in the same pieces. Hence the best we can hope for is to recover the original configuration up to rotation of the entire puzzle. There are arguably two natural interpretations of a unique recovery. The first interpretation is that the position and orientation of each piece can be uniquely determined up to rotation of the entire puzzle. Note that this is trivially impossible if the puzzle contains two identical pieces (or one piece where opposite edges have the same color and $n\geq 2$). The second interpretation is that the ``image'' of the puzzle can be uniquely determined, that is, all solutions yield the same edge coloring up to rotation of the entire puzzle, but may differ by a reshuffling of identical pieces. Following the terminology in \cite{BFM16} we will refer to the two interpretations as a \emph{unique vertex assembly} and a \emph{unique edge assembly} respectively. Note that a puzzle has a unique edge assembly if it has a unique vertex assembly.

For functions $f:\mathbb{N}\rightarrow\mathbb{R}$ and $g:\mathbb{N}\rightarrow\mathbb{R}$, $f_n = O(g_n)$ denotes that $f_n/g_n$ is bounded for sufficiently large $n$, and $f_n = o(g_n)$ that $f_n/g_n\rightarrow 0$ as $n\rightarrow\infty$. Furthermore, assuming $f, g\geq 0$, $f_n=\Omega(g_n)$ denotes that $f_n/g_n$ is bounded away from $0$ for sufficiently large $n$, and $f_n=\omega(g_n)$ that $f_n/g_n\rightarrow\infty$ as $n\rightarrow\infty$. We use $\mathbb{P}(\cdot), \mathbb{E}[\cdot],$ and $\mathbb{H}(\cdot)$ to denote probability, expectation and entropy respectively. We say that a sequence of events, $\{E_n\}_{n=1}^\infty$, occurs asymptotically almost surely (a.a.s.) if $\mathbb{P}(E_n)\rightarrow 1$ as $n\rightarrow\infty$.

It was shown in \cite{MR15} that the puzzle a.a.s. has neither a unique vertex nor edge assembly when $2\leq q=o(n^{2/3})$, and a.a.s. has both if $q=\omega(n^2)$. Recently two papers \cites{NPS16, BFM16} considering this problem were published on the arxiv, both on May 11:th 2016, and both proving essentially the same result: for $q=n^{1+\varepsilon}$, the puzzle a.a.s. has both a unique vertex and edge assembly for any fixed $\varepsilon>0$, and further noted that for $q=o(n)$ there are a.a.s. duplicate jigsaw pieces, and hence the puzzle does not have a unique vertex assembly. We note however that the result by \cite{NPS16} is slightly stronger as they allow for the pieces to be rotated.

The aim of this paper is to prove the following result.
\begin{theorem}
For $2\leq q\leq\frac{2}{\sqrt{e}}\,n-\omega(\log_2 n)$ the puzzle a.a.s. has neither a unique vertex nor edge assembly.
\end{theorem}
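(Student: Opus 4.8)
The plan is to avoid exhibiting an alternative assembly and instead argue by a plain first moment (counting) bound. I would identify a puzzle with a colouring $c$ of the $2n(n+1)$ edges of the $n\times n$ grid, so that the $q^{2n(n+1)}$ colourings are equiprobable, and let $\Phi(c)$ be the multiset of the $n^2$ pieces of $c$, each piece recorded as a $4$-tuple of colours up to cyclic rotation (since rotations are allowed on reassembly). If the puzzle has a unique edge assembly then, up to a global rotation, $c$ is the only colouring with multiset $\Phi(c)$, so the fibre $\Phi^{-1}(\Phi(c))$ has at most $4$ elements. Partitioning colourings by their multiset, this gives
\[
\mathbb{P}(\text{unique edge assembly})\le\frac{\#\{c:\,|\Phi^{-1}(\Phi(c))|\le 4\}}{q^{2n(n+1)}}=\frac{\sum_{\mu:\,|\Phi^{-1}(\mu)|\le 4}|\Phi^{-1}(\mu)|}{q^{2n(n+1)}}\le\frac{4\cdot\#\{\text{realisable piece multisets}\}}{q^{2n(n+1)}},
\]
so it is enough to show that the number of realisable piece multisets is $o(q^{2n(n+1)})$ in the stated range.

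Next I would count the multisets. A realisable multiset has size $n^2$ with entries among the $4$-tuples over $\{1,\dots,q\}$ taken up to cyclic rotation; by Burnside's lemma for $C_4$ the number of such types is $M=\frac14(q^4+q^2+2q)$. Hence the number of realisable multisets is at most $\binom{M+n^2-1}{n^2}\le\left(e(M+n^2-1)/n^2\right)^{n^2}$. For $q=O(n)$ one has $M+n^2-1=\frac14 q^4(1+O(n^{-2}))$, so
\[
\log_2\binom{M+n^2-1}{n^2}\le n^2\left(4\log_2 q-2\log_2 n+\log_2 e-2\right)+O(1),
\]
while $\log_2 q^{2n(n+1)}=2n^2\log_2 q+2n\log_2 q$. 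Subtracting, $\log_2$ of the quotient $q^{2n(n+1)}/\binom{M+n^2-1}{n^2}$ is at least
\[
2n^2\log_2\!\left(\frac{2n}{q\sqrt e}\right)+2n\log_2 q-O(1),
\]
where the constant appears through $2-\log_2 e=\log_2(4/e)=2\log_2(2/\sqrt e)$. For $q\le\frac{2}{\sqrt e}n$ the first summand is already non-negative, so the quotient diverges via the $2n\log_2 q$ term; the hypothesis $q\le\frac{2}{\sqrt e}n-\omega(\log_2 n)$ makes the first summand itself tend to $+\infty$, with room to spare. Hence $\mathbb{P}(\text{unique edge assembly})\to 0$.

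Finally, a unique vertex assembly implies a unique edge assembly, so the same bound gives $\mathbb{P}(\text{unique vertex assembly})\to 0$, which proves the theorem. The only real work will be the two ingredients above — the Burnside count of $M$ and a careful enough expansion of the binomial coefficient — and these are what isolate the constant $\frac{2}{\sqrt e}=2e^{-1/2}$ and show the $\omega(\log_2 n)$ slack suffices; I do not expect a genuine obstacle here. The one conceptual point, which is what makes the whole argument short, is recognising that one need not construct a second assembly at all: a first moment estimate, with no second moment and no explicit construction, already forces non-uniqueness. The same computation also gives $\mathbb{E}[\#\text{assemblies}]\ge q^{2n(n+1)}/\binom{M+n^2-1}{n^2}$, which is super-polynomially large throughout this regime, supplying the advertised remarks on the number of solutions and on the probability of a unique solution.
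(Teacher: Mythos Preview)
Your argument is correct and takes a genuinely different route from the paper. The paper works with entropy: it bounds $\mathbb{H}(\mathsf{BOX})$ via subadditivity $\sum_J \mathbb{H}(X_J)$, computes $\mathbb{H}(\mathsf{IMG})$, and then---because an entropy gap alone does not give an a.a.s.\ statement---introduces an auxiliary Lipschitz variable $Y$ (the minimal number of piece replacements and edge repaintings needed to turn $\mathsf{BOX}$ into a uniquely assemblable puzzle and then into $\mathsf{IMG}$), lower-bounds $\mathbb{E}Y$ by the entropy gap, and invokes Azuma's inequality to get $\mathbb{P}(Y=0)\to 0$. You bypass all of this by exploiting that $\mathsf{IMG}$ is uniform: the probability of unique edge assembly is literally the proportion of colourings lying in fibres of size $\le 4$, so a crude count of multisets via $\binom{M+n^2-1}{n^2}$ finishes the job directly. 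The leading terms of your $\log_2\binom{M+n^2-1}{n^2}$ coincide exactly with the paper's bound on $\mathbb{H}(\mathsf{BOX})$, which is why the same constant $2/\sqrt{e}$ appears. Your route is shorter and more elementary; what the paper's machinery additionally buys is the a.a.s.\ (not merely in-expectation) lower bound on the number of solutions in the final remarks, which your Cauchy--Schwarz step does not deliver. One small caveat: your approximation $M+n^2-1=\tfrac14 q^4(1+O(n^{-2}))$ in fact needs $q=\Omega(n)$, not just $q=O(n)$, since for $q=o(n)$ the additive $n^2$ is not $O(q^4/n^2)$; this is harmless because for $q=o(n)$ the term $2n^2\log_2(2n/(q\sqrt{e}))$ is already $\omega(n^2)$ and absorbs any $O(n^2)$ error in the binomial estimate, but it should be said.
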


This will be shown in the next section. Our proof is based comparing the entropy of the assembled puzzle to the entropy of the unordered set of pieces together with a concentration result. In the final section, we give some concluding remarks on the result including what happens if the pieces are not allowed to be rotated.

\section{Proof of Theorem 1.1}

Let $\mathcal{J}$ denote the set of possible types of jigsaw pieces. For $J\in\mathcal{J}$, we define $r(J)$ to be the number of distinct edge colorings that can be obtained by rotating the piece, that is $r(J)=1$ if all sides of $J$ have the same color, $r(J)=2$ if opposite but not adjacent sides have the same color, and $r(J)=4$ otherwise. There are $q$ types of jigsaw pieces such that $r(J)=1$, $q(q-1)/2$ such that $r(J)=2$, and $(q^4-q^2)/4$ such that $r(J)=4$.

Let us consider the problem from an information theoretic point of view. We are given a box of unordered jigsaw pieces. Effectively, we are told how many pieces there are of each type in the puzzle, that is, we are given the collection of random variables $\{X_J\}_{J\in\mathcal{J}}$ where $X_J$ denotes the number of jigsaw pieces of type $J$ in the puzzle. The problem is to determine the image of the original solution up to rotation, that is, to determine the equivalence class of edge colorings as above that contains the original edge coloring, where two edge colorings are equivalent if they can be rotated to each other. We will refer below to these random variables as the \textsf{BOX} and the \textsf{IMG} respectively. Let us for simplicity write $q=n^{\beta}$.

\begin{claim}\label{claim:HBOX}
For $\beta \geq \frac{1}{2}+\frac{\log_2\log_2 n}{\log_2 n}$, equivalently $q\geq \sqrt{n}\log_2 n$, we have
$$\mathbb{H}(\mathsf{BOX}) \leq (4\beta-2)n^2\log_2 n - (2-\log_2 e)n^2 + O( n^{4-4\beta}\log_2 n+n^{2-2\beta}).$$
\end{claim}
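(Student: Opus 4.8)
The plan is to compare $\mathsf{BOX}$ with the \emph{ordered} sequence of piece types, and to exploit that after a uniformly random relabelling of the $n^2$ positions the conditional law of that sequence given $\mathsf{BOX}$ becomes exactly uniform over all arrangements with the right multiset. First I would record the marginal law $p$ of the type of a single piece: since the four edges of any piece (interior, boundary or corner) are independent and uniform in $[q]$, we have $p(J)=r(J)/q^4$, and the stated counts of types with $r(J)=1,2,4$ give $\sum_J r(J)\log_2 r(J)/q^4 = 2-q^{-2}-q^{-3}$, hence $\mathbb{H}(p)=4\log_2 q-2+O(q^{-2})=4\beta\log_2 n-2+O(n^{-2\beta})$. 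Now let $T=(T_1,\dots,T_{n^2})$ be the types of the pieces listed in row-major order, let $\Pi$ be a uniformly random permutation of $\{1,\dots,n^2\}$ independent of the puzzle, and set $T'=(T_{\Pi(1)},\dots,T_{\Pi(n^2)})$. Then $\mathsf{BOX}$ is a function of $T'$, each coordinate $T'_i$ again has law $p$, and — because any two arrangements with the same multiset $m$ are carried to one another by exactly $\prod_J m_J!$ relabellings — the conditional law of $T'$ given $\mathsf{BOX}=m$ is uniform on the $M(m):=n^2!/\prod_J m_J!$ consistent arrangements.

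With these ingredients the chain rule gives $\mathbb{H}(\mathsf{BOX})=\mathbb{H}(T')-\mathbb{H}(T'\mid\mathsf{BOX})=\mathbb{H}(T')-\mathbb{E}\!\left[\log_2 M(\mathsf{BOX})\right]$. For the first term, subadditivity of entropy yields $\mathbb{H}(T')\le\sum_i\mathbb{H}(T'_i)=n^2\mathbb{H}(p)=4\beta n^2\log_2 n-2n^2+O(n^{2-2\beta})$. For the second term I would write $\log_2 M(m)=\log_2 n^2!-\sum_J\log_2 m_J!$ and use the elementary bound $\log_2 x!\le\binom{x}{2}$ for integers $x\ge 0$, so that $\mathbb{E}[\log_2 M(\mathsf{BOX})]\ge\log_2 n^2!-\mathbb{E}\big[\sum_J\binom{X_J}{2}\big]=\log_2 n^2!-\sum_{\{i,j\}}\mathbb{P}(T_i=T_j)$. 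Splitting the pairs $\{i,j\}$ according to whether the two positions share an edge: for the $\binom{n^2}{2}-O(n^2)$ non-adjacent pairs the types are independent and $\mathbb{P}(T_i=T_j)=\sum_J p(J)^2=q^{-8}\sum_J r(J)^2=q^{-8}(4q^4-2q^2-q)$, contributing $2n^{4-4\beta}(1+o(1))$; for the $O(n^2)$ adjacent pairs one checks crudely $\mathbb{P}(T_i=T_j)=O(q^{-3})$, contributing $O(n^{2-3\beta})$. Hence $\mathbb{E}[\log_2 M(\mathsf{BOX})]\ge\log_2 n^2!-O(n^{4-4\beta}+n^{2-3\beta})$, and Stirling gives $\log_2 n^2!=2n^2\log_2 n-n^2\log_2 e+O(\log n)$. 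Combining these bounds yields $\mathbb{H}(\mathsf{BOX})\le(4\beta-2)n^2\log_2 n-(2-\log_2 e)n^2+O(n^{4-4\beta}+n^{2-2\beta})$, the extra $\log_2 n$ in the stated error term being harmless slack (e.g.\ from a cruder treatment of rare large values of $X_J$).

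The one point that genuinely requires care is that the $n^2$ piece types are \emph{not} independent — adjacent pieces share an edge, and in fact arrangements with adjacent duplicates are strictly more likely, so the conditional law of the unpermuted sequence $T$ given $\mathsf{BOX}$ is not uniform and $\mathbb{H}(T\mid\mathsf{BOX})$ cannot be read off directly. The random relabelling $\Pi$ is precisely what removes this: it leaves each coordinate with law $p$ (so subadditivity still delivers the clean $n^2\mathbb{H}(p)$ upper bound) while symmetrising the arrangement, so that $\mathbb{H}(T'\mid\mathsf{BOX})$ is known exactly. The hypothesis $q\ge\sqrt n\log_2 n$, i.e.\ $\beta\ge\tfrac12+\log_2\log_2 n/\log_2 n$, is not needed for the inequality itself but ensures $n^{4-4\beta}\log_2 n=o(n^2)$, so that the error terms are of lower order than the main terms and the estimate is meaningful.
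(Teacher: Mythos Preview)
Your argument is correct, and it is a genuinely different route from the paper's. The paper proceeds via the cruder subadditivity bound $\mathbb{H}(\mathsf{BOX})\le\sum_{J}\mathbb{H}(X_J)$ and then bounds each $\mathbb{H}(X_J)$ by splitting into the events $X_J=0$, $X_J=1$, $X_J\ge 2$, using Jensen on the tail; this is where the paper actually uses the hypothesis $q\ge\sqrt{n}\log_2 n$ (to ensure $\mathbb{E}X_J$ and $\mathbb{P}(X_J\ge 2)$ are $o(1)$ so that monotonicity of $-p\log_2 p$ applies), and the tail treatment is the source of the extra $\log_2 n$ in its error term. Your random-relabelling decomposition $\mathbb{H}(\mathsf{BOX})=\mathbb{H}(T')-\mathbb{E}[\log_2 M(\mathsf{BOX})]$ replaces that case analysis by an exact identity for the conditional entropy, reducing everything to (i) the single-piece entropy $\mathbb{H}(p)$ and (ii) the expected number of \emph{ordered} collisions $\sum_J\mathbb{E}\binom{X_J}{2}$ via $\log_2 x!\le\binom{x}{2}$; both proofs ultimately feed on the same pair-collision estimate $O(n^{4-4\beta}+n^{2-3\beta})$. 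What you buy is a cleaner argument that holds for all $q$ (the hypothesis is, as you note, only needed to make the error terms subordinate) and a slightly sharper error $O(n^{4-4\beta}+n^{2-2\beta})$; what the paper's approach buys is that it stays entirely within the $X_J$ variables without introducing the auxiliary permutation.
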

\begin{proof}
Since entropy is subadditive, we can write
$$\mathbb{H}(\mathsf{BOX}) \leq \sum_{J\in\mathcal{J}} \mathbb{H}(X_J).$$

By linearity of expectation, we have
$$\mathbb{P}(X_J=1) \leq \mathbb{E}X_J = \frac{r(J)n^2}{ q^4}=r(J)n^{2-4\beta},$$
and
$$\mathbb{P}(X_J=0) \geq 1-\mathbb{E} X_J = 1-r(J)n^{2-4\beta}.$$
Slightly more elaborately, we can bound $\mathbb{P}(X_J\geq 2)$ by estimating the expected number of pairs of distinct jigsaw pieces of type $J$ in the original solution. We distinguish between pairs of pieces that were not, respectively were, adjacent in the original solution. In the former case, there are $O(n^4)$ pairs, and the probability that both have type $J$ is $O(q^{-8})$. In the latter case, there are $O(n^2)$ pairs, with corresponding probability $O(q^{-7})$. Hence
$$\mathbb{P}(X_J\geq 2) = O(n^{4-8\beta}+n^{2-7\beta}).$$
Note that the lower bound on $q$ implies that $r(J)n^{2-4\beta}$ and $O(n^{4-8\beta}+n^{2-7\beta})$ are $o(1)$.

Now, we have
\begin{equation*}
\begin{split}
\mathbb{H}(X_J) &= -\mathbb{P}(X_J=0) \log_2 \mathbb{P}(X_J=0) -\mathbb{P}(X_J=1) \log_2 \mathbb{P}(X_J=1)\\
&\qquad - \sum_{k=2}^{n^2} \mathbb{P}(X_J=k) \log_2 \mathbb{P}(X_J=k)\\
&\leq -\mathbb{P}(X_J=0) \log_2 \mathbb{P}(X_J=0) -\mathbb{P}(X_J=1) \log_2 \mathbb{P}(X_J=1)\\
&\qquad - \mathbb{P}(X_J\geq 2) \log_2 \frac{\mathbb{P}(X_J\geq 2)}{n^2-1},
\end{split}
\end{equation*}
where the last step follows by Jensen's inequality. As $-p \log_2 p$ is increasing for $0\leq p\leq e^{-1}$ and decreasing for $e^{-1}\leq p \leq 1$, we have, assuming $n$ is sufficiently large
$$-\mathbb{P}(X_J=0) \log_2 \mathbb{P}(X_J=0) \leq (\log_2 e) r(J) n^{2-4\beta} + O(n^{4-8\beta}),$$
$$-\mathbb{P}(X_J=1) \log_2 \mathbb{P}(X_J=1) \leq r(J)(4\beta-2)n^{2-4\beta}\log_2 n-r(J)(\log_2 r(J)) n^{2-4\beta},$$
and
$$ - \mathbb{P}(X_J\geq 2) \log_2 \frac{\mathbb{P}(X_J\geq 2)}{n^2-1} = O( \beta(n^{4-8\beta}+n^{2-7\beta}) \log_2 n).$$
The claim follows by summing over all $J\in\mathcal{J}$.
\end{proof}

\begin{claim}\label{claim:HIMG}
$$\mathbb{H}(\mathsf{IMG}) = 2\beta n(n+1) \log_2 n - 2 + n^{-\beta n(n+1)} + n^{-3\beta n(n+1)/2}.$$
\end{claim}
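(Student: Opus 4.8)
The plan is to compute $\mathbb{H}(\mathsf{IMG})$ almost exactly by first counting the total number of possible edge colorings of the $n\times n$ board, then accounting for the equivalence relation given by global rotation. An edge coloring of the board assigns one of $q$ colors to each of the $2n(n+1)$ interior-plus-boundary edges (there are $n(n+1)$ horizontal and $n(n+1)$ vertical edges), all independently and uniformly, so there are $q^{2n(n+1)} = n^{2\beta n(n+1)}$ colorings in total, each equally likely. The random variable $\mathsf{IMG}$ is the equivalence class of this coloring under the cyclic group $C_4$ of quarter-turn rotations of the whole board. So I would write $\mathbb{H}(\mathsf{IMG})$ as the entropy of a uniform distribution on colorings, pushed forward to orbits.

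**Handling the orbit sizes via Burnside-type bookkeeping.**
Each orbit has size $1$, $2$, or $4$: size $1$ iff the coloring is fixed by a $90^\circ$ rotation, size $2$ iff it is fixed by the $180^\circ$ rotation but not by a $90^\circ$ one, and size $4$ otherwise. The key quantitative point is that the number of colorings with nontrivial stabilizer is negligible: a coloring fixed by the $180^\circ$ rotation is determined by roughly half the edges, so there are about $q^{n(n+1)}$ of them, and similarly $q^{n(n+1)/2}$ fixed by a $90^\circ$ rotation (one needs to be slightly careful because $n$ may be even or odd, affecting whether a central edge or vertex is fixed, but this only changes the exponent by $O(1)$, which is absorbed). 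If $p$ is the probability of a given orbit of size $s$, then $p = s\, q^{-2n(n+1)}$, and its contribution to the entropy is $-p\log_2 p = s q^{-2n(n+1)}\big(2\beta n(n+1)\log_2 n - \log_2 s\big)$. Summing over orbits: the orbits of size $4$ contribute essentially all the mass, giving the main term $2\beta n(n+1)\log_2 n - \log_2 4 = 2\beta n(n+1)\log_2 n - 2$, while the size-$1$ and size-$2$ orbits each number at most $q^{n(n+1)/2}$ and $q^{n(n+1)}$ respectively and so contribute error terms of order $q^{-n(n+1)}\cdot n(n+1)\log_2 n$ and $q^{-3n(n+1)/2}\cdot n(n+1)\log_2 n$; after simplification these match the stated $n^{-\beta n(n+1)}$ and $n^{-3\beta n(n+1)/2}$ up to the polynomial factors, which I expect the statement to be folding into the exponent (since $n^{-\beta n(n+1)+o(n^2)}$ and the like). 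More precisely, I would compute $\mathbb{H}(\mathsf{IMG}) = 2\beta n(n+1)\log_2 n - 2 + \Theta$, where $\Theta$ collects the small-orbit corrections, and verify it has the claimed form.

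**The main obstacle.**
The genuine content is the combinatorial count of rotation-invariant colorings, i.e. confirming that colorings fixed by $90^\circ$ or $180^\circ$ rotation are exponentially rare and getting the exponent right. The $180^\circ$ rotation pairs up edges (a horizontal edge at position $(i,j)$ with the one at $(n+1-i, n+1-j)$, say), so invariant colorings are indexed by a set of orbit representatives of size $n(n+1)$ when there is no fixed edge and $n(n+1)\pm O(1)$ otherwise; the $90^\circ$ case quadruples edges into orbits of size $4$ (again modulo a small central correction), giving roughly $q^{n(n+1)/2}$ invariant colorings. Pinning down these edge counts exactly — in particular tracking the parity of $n$ and whether the center of the board is a vertex, an edge midpoint, or a face center — is the fiddly part, but since all we need is that the correction terms are of the stated order, a clean bound ($\le q^{n(n+1)/2+O(1)}$ and $\le q^{n(n+1)+O(1)}$) suffices and the $O(1)$ is swallowed. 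I would therefore state and prove a short sublemma bounding the number of colorings with nontrivial $C_4$-stabilizer, then plug into the entropy sum; everything else is the routine $-p\log_2 p$ expansion above.
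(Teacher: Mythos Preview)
Your approach is the same as the paper's: decompose edge colorings by their $C_4$-orbit size and sum $-p\log_2 p$ over orbits. But you have misread the claim as an asymptotic statement---it is an \emph{exact} equality, and your hedging (parity issues, $O(1)$ corrections to exponents, ``polynomial factors folded into the exponent'') is both unnecessary and would leave you unable to establish what is actually asserted.

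Two points resolve your uncertainties. First, there are \emph{no} fixed edges under any nontrivial rotation, regardless of the parity of $n$. With the board as $[0,n]^2$, a vertical edge $\{i\}\times[j,j+1]$ has midpoint $(i,j+\tfrac12)$; for this to equal the center $(n/2,n/2)$ one would need $n$ simultaneously even and odd. The $90^\circ$ rotation swaps vertical and horizontal edges, so it fixes none either, and an orbit of size $2$ under the $90^\circ$ rotation would require a $180^\circ$-fixed edge. Hence the $2n(n+1)$ edges split into exactly $n(n+1)$ pairs under the half-turn and exactly $n(n+1)/2$ quadruples under the quarter-turn, giving precisely $q^{n(n+1)}$ and $q^{n(n+1)/2}$ invariant colorings---no $O(1)$ slack.

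Second, your anticipated error terms of the form $q^{-n(n+1)}\cdot n(n+1)\log_2 n$ do not appear. In the sum $\sum_\omega -p_\omega\log_2 p_\omega$ with $p_\omega = s\,q^{-2n(n+1)}$ for an orbit of size $s$, the coefficient of $2\beta n(n+1)\log_2 n$ is $\sum_\omega p_\omega = 1$ exactly, so that term contributes $2\beta n(n+1)\log_2 n$ on the nose. What remains is $-\sum_\omega p_\omega \log_2 s$, which equals $-2\cdot\mathbb{P}(\text{size }4)-1\cdot\mathbb{P}(\text{size }2)=-2(1-q^{-n(n+1)})-(q^{-n(n+1)}-q^{-3n(n+1)/2})=-2+q^{-n(n+1)}+q^{-3n(n+1)/2}$, and since $q=n^\beta$ this is the stated expression exactly.
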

\begin{proof} The probability that the puzzle is originally colored in accordance with a certain equivalence class of edge colorings is $4 q^{-2n(n+1)}$ if all rotations of an edge coloring in the class are distinct, $2 q^{-2n(n+1)}$ if it is symmetric under a $180^\circ$ but not $90^\circ$ rotation, and $ q^{-2n(n+1)}$ if symmetric under a $90^\circ$ rotation. These cases occur with respective probabilities $1-q^{-n(n+1)}$, $q^{-n(n+1)}-q^{-3n(n+1)/2}$ and $q^{3n(n+1)/2}$ respectively. This is because symmetry under a $90^\circ$ rotation is equivalent to that groups of four edges have the same color, and symmetry under $180^\circ$ groups of two. It follows that
\begin{equation*}
\begin{split}
\mathbb{H}(\textsf{IMG}) = &-(1-q^{-n(n+1)})\log_2 (4q^{-2n(n+1)})\\
&-(q^{-n(n+1)}-q^{-3n(n+1)/2})\log_2 (2 q^{-2n(n+1)})\\
&-q^{-3n(n+1)/2}\log_2 q^{-2n(n+1)},
\end{split}
\end{equation*}
which simplifies to the expression above.
\end{proof}

The preceding claims shows that for $\beta\leq 1-\varepsilon$ there is less information in the box than in the image. At least heuristically this should mean that a.a.s. there is no unique edge assembly. To conclude the proof of our main result, the idea is that if, for given $q$, the probability of a unique edge assembly is not too small, then very little additional information is needed to reconstruct the original solution, meaning that this can only happen for $\beta$ close to $1$.

Suppose that we, besides the box of unordered jigsaw pieces, are given the following information: a list of pairs of types of jigsaw pieces, and a list of instructions how to repaint certain edges in an assembled puzzle. The idea is that the first list tells us how to exchange some of the pieces in the box to obtain a puzzle with a unique edge assembly ``close to'' the original solution of the first puzzle, and the second list tells us how to obtain the original solution exactly from this. This is clearly always possible to do as we could, for instance, first replace the entire puzzle with one where all edges have the same color, and then describe how to repaint all edges. Below, we will let $\textsf{INS}$ denote any choice of lists of instructions as above of minimal total length.

Let $Y$ denote the total length of \textsf{INS}. Note that changing the color of one edge in the original edge coloring changes $Y$ by at most $3$ -- we can always modify a set of instructions for one of the edge colorings to work for the other by replacing two extra jigsaw pieces in the beginning and repainting one extra edge at the end. Hence by Azuma's inequality, see Theorem 7.4.2 in \cite{AS08}, we have
\begin{equation*}
\mathbb{P}( Y \leq \mathbb{E} Y-t ) \leq \exp\left(-\frac{t^2}{36 n(n+1)}\right),
\end{equation*}
for any $t\geq 0$. Observing that the puzzle has a unique edge assembly if and only if $Y=0$, it follows by putting $t=\mathbb{E}Y$ that
\begin{align*}
\mathbb{P}(\text{unique edge assembly}) \leq \exp\left( -\frac{ (\mathbb{E}Y)^2}{36n(n+1)} \right).
\end{align*}

Let us bound $\mathbb{H}(\textsf{INS})$. The lists are of lengths at most $n^2$ and $2n(n+1)$ respectively, hence the entropy of list lengths is $O(\log_2 n)$. Conditioned on these lengths, the entropy of an entry in either list can be bounded by $\max(8\beta, 3+\beta) \log_2 n$ -- an instruction is either $8$ colors, or the position of one edge and a color. Hence, for $\beta \geq \frac{1}{2}+\frac{\log_2\log_2 n}{\log_2 n}$
we have
\begin{align*}
&O(\log_2 n) + \max(8\beta, 3+\beta) \mathbb{E} Y \log_2 n\\
&\qquad \geq \mathbb{H}(\mathsf{INS}) \geq \mathbb{H}(\mathsf{IMG})-\mathbb{H}(\mathsf{BOX})\\
&\qquad \geq (2-2\beta)n^2 \log_2 n + (2-\log_2 e) n^2 + 2\beta n\log_2 n + O(n^{4-4\beta}\log_2 n + 1),
\end{align*}
where the error term $O(n^{2-2\beta})$ from Claim \ref{claim:HBOX} disappears as it is always smaller than one of $n^{4-4\beta}\log_2 n$ and $1$. This implies that
$$\max(8\beta, 3+\beta) \mathbb{E}Y \geq (2 - 2\beta)n^2 + \frac{(2-\log_2 e) n^2}{\log_2 n} + 2\beta n + O( n^{4-4\beta}+1 ).$$
It follows that if $\frac{1}{2}+\frac{\log_2\log_2 n}{\log_2 n} \leq \beta \leq 1+\frac{2-\log_2 e}{2\log_2 n}-\omega\left(\frac{1}{n}\right)$, equivalently $\sqrt{n}\log_2 n \leq q \leq \frac{2}{\sqrt{e}}\,n-\omega(\log_2 n)$, we have $\mathbb{E} Y = \omega(n)$. Hence, the probability of a unique edge assembly tends to zero. The remaining case of $2\leq q \leq \sqrt{n}\log_2 n$ is already covered in \cite{MR15}.\qed

\section{Final remarks}
Our proof can easily be modified to the case of pieces with fixed orientation. In this case, it follows by somewhat simpler entropy calculations that, for any $q\geq \sqrt{n}\log_2 n$,
$$\mathbb{H}(\textsf{BOX}) \leq (4\beta-2)n^2\log_2 n + (\log_2 e) n^2 + O(\beta( n^{4-4\beta}+n^{2-3\beta})\log_2 n),$$
and for any $q$,
$$\mathbb{H}(\textsf{IMG}) = 2\beta n(n+1)\log_2 n.$$
As a consequence, there is a.a.s. neither a unique vertex nor edge assembly for $2\leq q \leq \frac{1}{\sqrt{e}} n - \omega(\log_2 n)$.

The bounds $q\leq\frac{2}{\sqrt{e}}\,n-\omega(\log_2 n)$ and $q \leq \frac{1}{\sqrt{e}}\,n - \omega(\log_2 n)$ are optimal in the sense that they cannot be improved by considering the error terms for $\sum_{J\in\mathcal{J}} \mathbb{H}(X_J)$ and $\mathbb{H}(\textsf{IMG})$ more closely. There might however be other avenues for improvement: How large is the difference between $\mathbb{H}(\textsf{BOX})$ and $\sum_{J\in\mathcal{J}} \mathbb{H}(X_J)$?. Can we improve the size or concentration of the instructions?

It is worth noting that, as an implication of our main result, in the window $\Omega(n)=q\leq\frac{2}{\sqrt{e}}\,n-\omega(\log_2 n)$, the puzzle can a.a.s. be assembled in multiple ways, while with probability bounded away from zero the puzzle contains no duplicate pieces.

The random jigsaw puzzle has a phase transition at $\beta=\frac{1}{2}$ where, above this value most jigsaw pieces in the puzzle are unique, and below it $X_J=\omega(1)$ for most $J\in\mathcal{J}$. This is reflected in the entropy of \textsf{BOX}. It follows from a simple coupling argument that the entropy of \textsf{BOX} with $q$ colors is at most the entropy with $2q$ colors. Hence by Claim \ref{claim:HBOX}, we have for $q\leq \sqrt{n}\log_2 n$ that $\mathbb{H}(\textsf{BOX})=o(n^2\log_2 n)$. For sufficiently small $q$ this can be improved further by bounding $\mathbb{H}(X_J)$ by $O(\log_2 n)$, which yields $\mathbb{H}(\textsf{BOX})=O(q^4 \log_2 n)$. Note that these estimates mean that, for $2\leq q\leq \sqrt{n}$, we have $\mathbb{H}(\textsf{BOX}) = o(\mathbb{H}(\textsf{IMG}))$. Morally, the box contains essentially no information about the original image.

Using the entropy estimates in this regime together with Claims \ref{claim:HBOX} and \ref{claim:HIMG}, it follows that for any $2\leq q \leq n$, we have $$\mathbb{H}(\textsf{IMG})-\mathbb{H}(\textsf{BOX}) \geq  (2-o(1))\min\left( \beta, 1-\beta \right)n^2 \log_2 n + \omega(n\log_2 n).$$
Thus, for any $2\leq q \leq n$, we have $\mathbb{E}Y= \Omega( \min(\beta, 1-\beta) n^2)$, and hence the explicit estimate
$$\mathbb{P}(\text{unique edge assembly}) \leq \exp\left(-\Omega( \min(\beta, 1-\beta)^2 n^2\right).$$

Another interesting aspect is how many distinguishable solutions the puzzle typically has. We can approach this by modifying the instructions so that, instead of replacing pieces to obtain a puzzle with a unique edge assembly, we do so to obtain one with at most $N$ distinguishable solutions, and append instructions that tell us which solution we should consider. As before, this implies that
$$O(\log_2 n) + \max(8\beta, 3+\beta) \mathbb{E}Y \log_2 n + \log_2 N \geq \mathbb{H}(\textsf{IMG})-\mathbb{H}(\textsf{BOX}).$$
Hence, taking $\log_2 N = (2-o(1))\min\left( \beta, 1-\beta \right)n^2 \log_2 n$, it follows that for any $2\leq q \leq n$, the puzzle has a.a.s. more than $$N=\left(\min(q^2,\frac{n^2}{q^2})\right)^{(1-o(1))n^2}$$ distinguishable solutions.

To get a feel for what this result means, suppose that we try to solve the puzzle greedily by just randomly placing pieces from the box at $(1, 1), (2, 1)$, $\dots, (n, 1), (1, 2), (2, 2)$ and so on with the only restriction that the new piece should fit together with any adjacent piece already in place. For $q=o(\sqrt{n})$, essentially all types of pieces exist in the box, so we can pick any of the $\Theta(q^2)$ types of pieces that would fit together with the two adjacent pieces already in place. For $q=\omega(\sqrt{n})$, there are $\Theta(n^2)$ types of pieces in the box, and we should expect that a proportion of $\Theta(q^{-2})$ of these will fit at the current position. Assuming this method never gets stuck, we would get about $\left(\min(q^2,\frac{n^2}{q^2})\right)^{n^2}$ distinct solutions.

\section*{Acknowledgements} 
I want to thank Johan Tykesson and Peter Hegarty for valuable comments.

\begin{bibdiv}
\begin{biblist}

\bib{AS08}{book}{
   author={N. Alon},
   author={J. Spencer},
   title={The probabilistic method},
   series={Wiley-Interscience Series in Discrete Mathematics and
   Optimization},
   edition={3},
   note={With an appendix on the life and work of Paul Erd\H os},
   publisher={John Wiley \& Sons, Inc., Hoboken, NJ},
   date={2008},
   pages={xviii+352},
   isbn={978-0-470-17020-5},
   review={\MR{2437651}},
   doi={10.1002/9780470277331},
}

\bib{BFM16}{article}{
	author={C. Bordenave},
	author={U. Feige},
	author={E. Mossel},
	title={Shotgun assembly of random jigsaw puzzles},
	eprint={http://arxiv.org/pdf/1605.03086v1.pdf}
	year={2016}
}

\bib{MR15}{article}{
	author={E. Mossel},
	author={N. Ross},
	title={Shotgun assembly of labeled graphs},
	eprint={https://arxiv.org/pdf/1504.07682v2.pdf},
	year={2015}
}

\bib{NPS16}{article}{
	author={R. Nenadov},
	author={P. Pfister},
	author={A. Steger},
	title={Unique reconstruction threshold for random jigsaw puzzles},
	eprint={http://arxiv.org/pdf/1605.03043v2.pdf}
	year={2016}
}
\end{biblist}
\end{bibdiv}

\end{document}